\newtheorem{theorem}{Theorem}[section]
\newtheorem{lemma}[theorem]{Lemma}
\newtheorem{fact}[theorem]{Fact}
\theoremstyle{definition}
\newtheorem{definition}[theorem]{Definition}
\newtheorem{ques}[theorem]{Question}
\newtheorem{corollary}[theorem]{Corollary}
\newtheorem{conjecture}[theorem]{Conjecture}
\theoremstyle{remark}
\numberwithin{equation}{section}
\begin{document}

\title[Unbalancedness of ergodic measure-preserving transformations]{Unbalancedness of the conjugacy relation of ergodic measure-preserving transformations}


\author{Bo Peng}
\address{Department of Mathmatics and Statistics, McGill University. 805 Sherbrooke Street West Montreal, Quebec, Canada, H3A 2K6}
\email{bo.peng3@mail.mcgill.ca}


\date{}

\dedicatory{}

\commby{}


\maketitle
\begin{abstract}
    We show that the isomorphism of ergodic measure-preserving transformations is not Borel reducible to the relation induced by the conjugacy action of the full group of an ergodic measure-preserving transformation on itself. This answers a question of Le Ma\^{i}tre in the negative and gives a positive indication towards a conjecture of Sabok. In fact, we prove that the isomorphism of ergodic measure-preserving transformation is unbalanced, which answers another question of Le Ma\^{i}tre in the positive.
\end{abstract}
\section{introduction}

The history of classifying measure-preserving transformations (MPT) up to conjugacy can be traced back to the early 20th century when von Neumann proved that ergodic transformations with discrete spectrum can be classified by the eigenvalues of the associated Koopman operator. In 1971, Ornstein \cite{entropy} proved that Bernoulli shifts are classified by their entropy. Despite many efforts and partial results, the general problem of classifying MPTs was open for quite a long time. 

Let $E$ and $F$ be two equivalence relations on Polish spaces $X$ and $Y$, respectively. A Borel function from $X$ to $Y$ is called a \textbf{Borel reduction} from $E$ to $F$, if for any $x_1,x_2\in X$, we have
$$
x_1Ex_2\,\,\mbox{if and only if}\,\,f(x_1)Ff(x_2).
$$
We say $E$ is \textbf{Borel reducible} to $F$ if there is a Borel reduction from $E$ to $F$ and denote it by $E\leq_B F$. For more details and motivation of a Borel reduction see \cite{Foremanreduction}.  By applying this theory to the classification problems, one can prove impossibility of various types of classification. We call such a result an \textbf{anti-classification theorem}.

In the last 20 years, many types of anti-classification results for isomorphism of MPTs were proved. Foreman and Weiss \cite{Foremanweiss} proved that the conjugacy action of MPTs on ergodic measure-preserving transformations (EMPT) is \textbf{turbulent}. This shows that von Neumann's classification of EMPTs with discrete spectrum is impossible to generalize to classify all EMPTs. A great accomplishment in this direction is that Foreman, Rudolph and Weiss \cite{NonBorelnessEMPT} proved that the isomorphism relation of EMPTs is not Borel. By developing completely new techniques, Foreman and Weiss \cite{SmoothMPTnonBorel} also proved this for smooth ergodic measure-preserving diffeomorphisms of the on the torus, disk and annulus. Recently, Gerber and Kunde \cite{Kakutaninonborel} showed that the Kakutani equivalence for MPTs is not Borel and Kunde \cite{WeakliymixingnonBorel} proved non-Borelness of weakly mixing diffeomorphisms on the torus, disk and annulus, for both conjugacy and Kakutani equivalence. 

However, the exact complexity of the isomorphism of ergodic measure-preserving transformations remains open. A famous result of Sabok \cite{Choquetcomplete} shows that the affine homeomorphism relation of metrizable compact Choquet simplices is a complete orbit equivalence relation. Since the space of invariant measures of a homeomorphism can be equipped with a Choquet simplex structure, Sabok made the following conjecture (See \cite[Open problem 12]{Surveyforeman} and \cite[Conjecture 4.1]{OPENPROBLEMS}):

\begin{conjecture}
    (Sabok Conjecture) The isomorphism relation of EMPTs is a complete orbit equivalence relation.
\end{conjecture}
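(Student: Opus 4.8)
The plan is to take the statement at face value as a goal and prove that isomorphism of EMPTs, which I will denote $\cong$, is a complete orbit equivalence relation. First I would record that $\cong$ is genuinely an orbit equivalence relation: it is induced by the continuous conjugation action of the Polish group $\mathrm{Aut}(X,\mu)$, taken with the weak topology, on the Borel set of ergodic elements of $\mathrm{Aut}(X,\mu)$. Thus the content of the conjecture is not membership in the class but \emph{universality}, namely that every orbit equivalence relation Borel reduces to $\cong$. By Sabok's theorem \cite{Choquetcomplete}, the affine homeomorphism relation $\cong_{\mathrm{aff}}$ of metrizable Choquet simplices is already a complete orbit equivalence relation, so it suffices to produce a single Borel reduction from $\cong_{\mathrm{aff}}$ to $\cong$.

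For the reduction I would fix, Borel-uniformly in a code for a metrizable Choquet simplex $K$, a minimal Cantor system $(Z_K,\varphi_K)$ whose simplex of $\varphi_K$-invariant probability measures is affinely homeomorphic to $K$; such a realization, uniform in $K$, is available from Downarowicz's work on realizing Choquet simplices in topological dynamics. Out of $(Z_K,\varphi_K)$ I would then manufacture a single EMPT $T_K$ carrying a \emph{canonical simplex-valued conjugacy invariant} $\Sigma(T_K)$ — the natural candidates are the simplex of $2$-fold self-joinings of $T_K$, or the invariant-measure simplex of a canonically associated non-ergodic extension built from the centralizer or from a skew product over $(Z_K,\varphi_K)$ — arranged so that $\Sigma(T_K)\cong_{\mathrm{aff}}K$ and so that $K\mapsto T_K$ is Borel and functorial. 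Functoriality of $\Sigma$ then gives the forward implication $K\cong_{\mathrm{aff}}K'\Rightarrow T_K\cong T_{K'}$ for free, and Borelness of the parametrized construction keeps the map a legitimate Borel reduction on that side.

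The hard part, where I expect essentially all of the difficulty to concentrate, is the converse \emph{rigidity} implication $T_K\cong T_{K'}\Rightarrow K\cong_{\mathrm{aff}}K'$: one must show that $\Sigma$ is a \emph{complete} invariant on the engineered family, so that no abstract measure isomorphism between $T_K$ and $T_{K'}$ can exist unless it is forced to carry $\Sigma(T_K)$ affinely onto $\Sigma(T_{K'})$. The danger is precisely that the vast flexibility of measure isomorphisms produces ``unexpected'' conjugacies collapsing simplices that are not affinely homeomorphic. Ruling these out appears to require endowing the whole family with a strong joining-rigidity property — a relative minimal-self-joinings or prime-with-prescribed-centralizer structure — that remains stable under the Borel parametrization while still permitting an arbitrary prescribed $K$; no such uniform construction is presently available, and this is the decisive obstacle between the available evidence and a full proof. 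In this light the \textbf{unbalancedness} of $\cong$ established below is exactly the right kind of supporting evidence: it removes a potential obstruction to completeness by showing that $\cong$ does not Borel reduce to the balanced full-group conjugacy relation, a feature any complete orbit equivalence relation must have, even though the rigid encoding needed to upgrade this to the reduction $\cong_{\mathrm{aff}}\le_B\cong$ is the step that still has to be carried out.
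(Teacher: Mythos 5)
You have not produced a proof, and indeed no proof exists to compare against: the statement you were given is stated in the paper as an open \emph{conjecture} (Sabok's Conjecture), and the paper's actual contribution runs in the opposite logical direction --- it proves unbalancedness of the conjugacy relation, which is merely a \emph{necessary} condition for completeness (a consequence the conjecture would have, via non-reducibility to TSI group actions), not the conjecture itself. Your proposal is honest about this: you reduce the problem, via Sabok's theorem \cite{Choquetcomplete}, to finding a Borel reduction from affine homeomorphism of Choquet simplices to isomorphism of EMPTs, and you then concede that the decisive construction is ``not presently available.'' That concession is the genuine gap, and it is exactly where the open problem lives; everything before it is a plausible but unexecuted program.

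Beyond the acknowledged gap, your bookkeeping of which implication comes ``for free'' is backwards. If $\Sigma$ is a genuine conjugacy invariant (as the $2$-fold self-joining simplex is) and you have arranged $\Sigma(T_K)\cong_{\mathrm{aff}}K$, then the implication $T_K\cong T_{K'}\Rightarrow K\cong_{\mathrm{aff}}K'$ is the automatic one --- no rigidity is needed, since any isomorphism of the systems carries the invariant along. The two genuinely hard steps are the ones you wave through: (a) realizing an \emph{arbitrary} metrizable Choquet simplex as such an invariant of a single \emph{ergodic} system, Borel-uniformly in a code for $K$ (Downarowicz's realization gives $K$ as the invariant-measure simplex of a minimal Cantor system, but passing to one EMPT $T_K$ whose self-joining simplex, or other measure-isomorphism invariant, recovers $K$ is precisely the unproved engineering --- self-joining simplices carry structural constraints, e.g.\ the product and graph joinings, so not every $K$ appears naively); and (b) showing the map depends only on the affine homeomorphism type of $K$ and not on the chosen code, i.e.\ $K\cong_{\mathrm{aff}}K'\Rightarrow T_K\cong T_{K'}$, for which the ``functoriality'' you invoke is an unjustified strong hypothesis, not a consequence of any known realization theorem. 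Your closing observation --- that the paper's unbalancedness result is supporting evidence because it rules out one obstruction to completeness --- is correct and matches the paper's framing, but it does not shorten the distance to a proof.
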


However, in the same survey \cite{OPENPROBLEMS}, Le Ma\^{i}tre asked the following question. Let $T\in {\rm Aut}(X,\mu)$ be a measure-preserving transformation. We define its \textbf{full group} $[T]$ as
$$
[T]=\{U\in {\rm Aut}(X,\mu)| \forall x\,\, U(x)\in \{T^n(x)\}_{n\in \mathbb{Z}}\}.
$$
Fix an EMPT $T_0$.
\begin{ques} \label{Q0}
    (Le Ma\^{i}tre \cite[Question 4.2]{OPENPROBLEMS}) Does the conjugacy on ${\rm Aut}(X,\mu)$ Borel reduce to the conjugacy relation on $[T_0]$? Does it when we restrict to ergodic transformations?
\end{ques}

Note that Sabok's conjecture implies the negative answer to the above question. The reason is that the Halmos metric is a complete bi-invariant metric on $[T_0]$. A Polish group with a complete two-sided invariant metric is called a \textbf{TSI Polish group}. It is known (see \cite{Unblanced}) that any action induced by this kind of group can not be a complete orbit equivalence relation.

In \cite{Unblanced}, Allison and Panagiotopoulos developed a new theory of \textbf{generic unbalancedness} to prove that an equivalence can not be Borel reducible to any ${\rm TSI}$ group action. See also \cite{LongZheng} for some applications and examples of this theory.

\begin{definition}
    Let $G$ be a Polish group acting in a Borel way on a Polish space $X$. Let $x,y\in X$, we write $x\leftrightsquigarrow y$ if for every open neighborhood $V$ of the identity of $G$ and every open set $U\subset X$ having non-empty intersection with the orbit of $x$ or $y$, there exist $g^x,g^y\in G$ with $g^xx\in U$, $g^yy\in U$, so that:
    $$
         g^yy\in \overline{V(g^xx)}\,\,\mbox{and}\,\,(g^xx)\in \overline{V(g^yy)}.
    $$
    
\end{definition}

    Note that if $x \leftrightsquigarrow y$, then $\overline{Gx}=\overline{Gy}$ (see \cite{Unblanced}). Also, for any $y\in X$ sharing the same orbit with $x$, we have $x\leftrightsquigarrow y$.

\begin{definition}
   \cite[Definition 1.1]{Unblanced} Let $X$ be a Polish $G$-space and let $C\subset X$ be a $G$-invariant set. The \textbf{unbalanced graph} associated to the action of $G$ is the graph $(C/G, \leftrightsquigarrow)$ where $C/G=\{[x]|x\in C\}$ and $[x]\leftrightsquigarrow [y]$ if and only if $x \leftrightsquigarrow y$.
\end{definition}

\begin{definition}
    \cite{Unblanced} The group action $G$ on $X$ is \textbf{unbalanced} if it has meager orbits and for every $G$-invariant comeager $C\subset X$, there is a comeager $D\subset C$ so that for all $x,y\in D$ there is a path between $[x]$ and $[y]$ in $C/G$.
\end{definition}

 In other words, the action of $G$ on $X$ is unbalanced if for every $G$-invariant subset $C$, there is a comeager $D\subset C$ such that for any $x,y\in D$ there is a sequence $x_0,x_1,\dots,x_n \in C$ with $x_0=x$ and $x_n=y$ such that $x_{i-1} \leftrightsquigarrow x_i$ for all $0<i<n$. 

\begin{theorem} \label{unbalance}
    {\rm (Allison and Panagiotopoulos \cite{Unblanced})} If the Polish $G$-space is generically unbalanced, then the orbit equivalence relation is not classifiable by ${\rm TSI}$ Polish group actions. 
\end{theorem}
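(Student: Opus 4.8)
The plan is to argue by contradiction. Suppose the orbit equivalence relation $E_G$ of the generically unbalanced $G$-space $X$ were Borel reducible to the orbit equivalence relation $E_H$ of some TSI Polish group $H$ acting on a Polish space $Y$, witnessed by a Borel reduction $f\colon X\to Y$. The argument rests on two pillars: first, that TSI group actions are \emph{balanced}, in the sense that the connected components of the unbalanced graph $Y/H$ are generically singletons; and second, that a Borel reduction transports the combinatorics of the unbalanced graph from $X$ to $Y$ on a comeager set. Granting these, generic unbalancedness of $X$ furnishes a comeager $D\subset X$ all of whose points lie in a single connected component of the unbalanced graph $X/G$; since the $G$-orbits are meager and a countable union of meager sets is meager, $D$ must meet uncountably many orbits. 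Transporting $\leftrightsquigarrow$-paths along $f$ then places, inside a single connected component of $Y/H$, the images of uncountably many pairwise $E_G$-inequivalent points, which are pairwise $E_H$-inequivalent because $f$ is a reduction. This contradicts the triviality of the components of $Y/H$.

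The first pillar is where I would use the hypothesis on $H$ most heavily. Let $d$ be a complete bi-invariant metric on $H$ and let $V_\varepsilon$ denote the open $d$-ball of radius $\varepsilon$ about the identity; bi-invariance makes each $V_\varepsilon$ invariant under conjugation, so $h V_\varepsilon h^{-1}=V_\varepsilon$ for every $h\in H$. The point of this symmetry is that the amount of translation needed to witness $y\leftrightsquigarrow y'$ is measured identically from either side: if $h'y'\in\overline{V_\varepsilon(hy)}$, then conjugation-invariance lets me rewrite the witnessing elements so that the displacements $hy\rightsquigarrow h'y'$ and $h'y'\rightsquigarrow hy$ are controlled by the same $\varepsilon$. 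I would turn this into an obstruction by extracting, along a vanishing sequence $\varepsilon_n\to0$ and a shrinking basis of open sets, a limiting element of $H$ carrying $y$ to $y'$, with completeness of $d$ guaranteeing the limit exists. The conclusion I aim for is that $y\leftrightsquigarrow y'$ forces $y'\in Hy$ for a comeager set of $y$, so that each component of $Y/H$ is generically a single orbit.

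The second pillar, the functoriality of the unbalanced graph under Borel reductions, is the main obstacle. The difficulty is that $\leftrightsquigarrow$ is defined through the group neighborhoods $V$ and the translates $g^x,g^y$, whereas $f$ is not equivariant and sees only the equivalence relation together with the topology. Since $f$ is Borel it has the Baire property, hence is continuous on a comeager invariant-enough set $X_0\subseteq X$, and I would restrict attention there. The crucial and delicate step is to establish an \emph{intrinsic} reformulation of $x\leftrightsquigarrow x'$ phrased only in terms of $E_G$ and the topology of $X$ — roughly, that arbitrarily small neighborhoods of the orbits of $x$ and of $x'$ can be made to overlap and to be mutually approximable within the equivalence relation, with no reference to the group. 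Once $\leftrightsquigarrow$ is characterized group-freely, a map that is continuous on a comeager set and preserves the equivalence relation preserves the relation, and hence preserves paths in the unbalanced graph, on a comeager set.

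Finally I would assemble the pieces with a category bookkeeping argument, which is the remaining technical hurdle. The genuine gap to close is that the transported points must land where balancedness has been verified, and $f(D)$ need not be comeager in $Y$. I would remedy this by taking as ambient space for the balancedness analysis the $H$-invariant Borel hull of $f(X)$, equivalently the closure of $Hf(X)$, and by combining a Kuratowski--Ulam argument with the Vaught transforms of the relevant open sets to guarantee that the comeager balancedness locus of that hull meets the image of $D$. With the image of the connected, orbit-rich set $D$ thereby placed inside a single trivial component of the unbalanced graph of the hull, the coexistence of uncountably many distinct $H$-orbits within one singleton component yields the contradiction and completes the proof.
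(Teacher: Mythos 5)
First, a point of calibration: the paper you are working from does not prove Theorem~\ref{unbalance} at all. It is quoted with attribution from Allison and Panagiotopoulos \cite{Unblanced} and used as a black box (the paper's own contribution is to verify the hypothesis, generic unbalancedness, for the conjugacy action). So your attempt can only be measured against the cited proof, and against that standard it reconstructs the right high-level skeleton — a reduction $f$ to a TSI orbit equivalence relation, continuity of $f$ on a comeager set, a balancedness property of TSI actions, and a transport-of-paths argument — but both of your ``pillars'' have genuine gaps exactly where the real work lies.

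For the first pillar, your plan to show that $y \leftrightsquigarrow y'$ forces $y' \in Hy$ by ``extracting a limiting element of $H$'' from witnesses along $\varepsilon_n \to 0$ does not work as described: the witnessing elements $g^y, g^{y'}$ produced by the definition of $\leftrightsquigarrow$ change with every choice of $V_{\varepsilon_n}$ and $U$, and nothing in the definition makes any sequence of them Cauchy in the bi-invariant metric; completeness supplies limits only of Cauchy sequences, and TSI Polish groups are in general not locally compact, so there is no compactness to extract a convergent subsequence either. Without such control, $y \leftrightsquigarrow y'$ yields only $\overline{Hy} = \overline{Hy'}$ — which holds for arbitrary Polish group actions and carries no TSI-specific obstruction whatsoever. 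The genuine TSI input is the identity $\overline{V(hy)} = h\,\overline{Vy}$ for conjugation-invariant $V$, which makes ``$V$-entanglement'' a basepoint-independent relation between orbits; converting that into an obstruction is a Baire-category argument, not a limit in $H$.

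For the second pillar, the ``intrinsic, group-free reformulation of $x \leftrightsquigarrow x'$ in terms of $E_G$ and the topology'' that you acknowledge as the crucial step is not available, and this is not a presentational gap but the heart of the matter: $\leftrightsquigarrow$ quantifies over group neighborhoods, and even granting $f$ continuous on a comeager $C$, Borel reductions do not preserve closure relations (the relevant limit points may exit $C$, the image $f(X)$ may be nowhere dense in $Y$, and the target-side quantifier over open sets of $Y$ is not reached by the image). What replaces your group-free characterization in the actual proof of \cite{Unblanced} is a correspondence-of-scales lemma, proved by Kuratowski--Ulam/Vaught-transform arguments: for every conjugation-invariant $W \ni 1_H$ and comeager many $x$, there are $V \ni 1_G$ and an open $U \ni x$ such that $V$-entangled pairs of good points in $U$ are sent by $f$ to $\overline{W}$-related points of $Y$. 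Your proposal names the right tools but defers exactly this lemma. Relatedly, your endgame patches the wrong hole: the reason the definition of unbalancedness quantifies over an arbitrary $G$-invariant comeager set $C$ is precisely so that the intermediate vertices of the $\leftrightsquigarrow$-path can be forced to lie where the correspondence lemma applies; passing to the closure of $Hf(X)$ in $Y$ does not address that, since the bookkeeping must be done on the source side. Finally, the uncountably-many-orbits count is unnecessary — a single transported edge between two $E_G$-inequivalent points already contradicts the reduction once the two pillars are in place.
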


In \cite[Question 4.3]{OPENPROBLEMS}, Le Ma\^{i}tre also asked the following question:

\begin{ques} \label{Q1}
    (Le Ma\^{i}tre \cite[Question 4.3]{OPENPROBLEMS}) Is the conjugacy action of ${\rm MPTs}$ on itself unbalanced? Is it when we restricted to ${\rm EMPTs}$? 
\end{ques}

By the result of \cite{Unblanced}, a positive answer to Question \ref{Q1} implies the negative answer to Question \ref{Q0}. In this paper, we answer Question \ref{Q1} in the positive and thus Question \ref{Q0} in the negative.

\begin{theorem} \label{Mainthm}
    The conjugacy actions of ${\rm Aut}(X,\mu)$ on both itself and on ergodic measure-preserving transformations are unbalanced.
\end{theorem}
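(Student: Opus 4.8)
The plan is to reduce the theorem to a single approximation lemma and then read off unbalancedness almost formally. Fix the conjugation action of $G={\rm Aut}(X,\mu)$ on $X={\rm Aut}(X,\mu)$ (resp. on the $G$-invariant Polish subspace of ergodic transformations), where $g\cdot T=gTg^{-1}$. Let $D_0$ be the comeager set of aperiodic transformations (resp. the weakly mixing ones in the EMPT case); every member of $D_0$ has weakly dense conjugacy class by the Rokhlin lemma, so $\overline{GT}=X$ for all $T\in D_0$. The \textbf{Main Lemma} I aim for is: for all $S,T\in D_0$ one has $S\leftrightsquigarrow T$. Granting this, unbalancedness is immediate: the orbits are meager (this is already contained in the turbulence of the conjugation action proved by Foreman and Weiss, or follows from the fact that a non-meager conjugacy class would be comeager while there are continuum-many pairwise non-conjugate ergodic transformations), and for any $G$-invariant comeager $C$ I set $D=D_0\cap C$, which is comeager in $C$; any $S,T\in D$ then satisfy $[S]\leftrightsquigarrow[T]$ with both endpoints in $C/G$, a path of length one.

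Next I would unwind $\leftrightsquigarrow$ for the conjugation action into a finite combinatorial statement. A basic identity neighborhood has the form $V=\{v:\mu(vQ_j\triangle Q_j)<\delta\ \forall j\}$ for a finite partition $\mathcal{Q}=\{Q_j\}$, and a basic open $U$ is specified by a finite partition $\mathcal{P}$ and tolerance $\epsilon$. The key elementary observation is that every element of $V\cdot(gSg^{-1})=\{v\,gSg^{-1}v^{-1}:v\in V\}$ has the same two-block $\mathcal{Q}$-data as $gSg^{-1}$ up to $O(\delta)$: if $v$ preserves each $Q_j$ setwise then $\mu(v R v^{-1}Q_i\cap Q_j)=\mu(RQ_i\cap Q_j)$ for $R=gSg^{-1}$, and nearly so when $v\in V$. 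Hence to arrange $hTh^{-1}\in\overline{V(gSg^{-1})}$ and, symmetrically, $gSg^{-1}\in\overline{V(hTh^{-1})}$, it suffices to produce conjugates $S'=gSg^{-1}$ and $T'=hTh^{-1}$ lying in $U$ and sharing the same two-block data for the common refinement $\mathcal{R}=\mathcal{P}\vee\mathcal{Q}$.

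The construction of such conjugates is the engine of the proof and is where the Rokhlin lemma enters. Using aperiodicity I would build, for each of $S$ and $T$, a tall Rokhlin tower of common height $N$ over bases of equal measure, covering all but $\eta$ of $X$, refined so that each level is nearly $\mathcal{R}$-monochromatic. Because the conjugacy class of an aperiodic transformation is weakly dense, I can choose the towers and the placement of $\mathcal{R}$ (that is, choose $g$ and $h$) so that $S'$ and $T'$ carry the \emph{same} sequence of $\mathcal{R}$-colors on the bottom $(1-\rho)N$ levels and so that the resulting $\mathcal{P}$-marginal lands inside $U$. On this common tower define $v$ to send the $S'$-levels to the $T'$-levels by the color-matching identification, letting $v$ act arbitrarily on the top $\rho N$ levels and on the $\eta$-error. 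Since matched levels have the same $\mathcal{Q}$-color, $v$ nearly fixes every $Q_j$, so $v\in V$; and $v S' v^{-1}$ agrees with $T'$ off a set of measure $\rho+\eta$, so letting $\rho,\eta\to0$ and the tower refine gives $T'\in\overline{V(gSg^{-1})}$. Running the construction with the roles of $S$ and $T$ interchanged yields $gSg^{-1}\in\overline{V(hTh^{-1})}$, completing $S\leftrightsquigarrow T$.

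The main obstacle is precisely the coordinated realization in the previous paragraph: showing that two \emph{arbitrary} aperiodic transformations admit conjugates whose two-block $\mathcal{R}$-statistics coincide and fall inside the prescribed $U$, equivalently that $\{vS'v^{-1}:v\in V\}$ is weakly dense in the slice of transformations with the same $\mathcal{Q}$-data as $S'$. This requires careful Rokhlin bookkeeping---matching level frequencies across two different towers, controlling the small top error, and keeping the conjugating $v$ inside $V$---and is the step I expect to occupy the bulk of the work. For the EMPT statement one runs the identical argument inside the $G$-invariant Polish space of ergodic transformations, taking $D_0$ to be the weakly mixing transformations with dense conjugacy class and arranging that all approximants produced by the tower construction remain ergodic, which is possible since the weakly mixing transformations are weakly dense among the ergodic ones.
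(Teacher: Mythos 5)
You have the right skeleton, and it matches the paper's: orbits are dense and meager by Foreman--Weiss, so it suffices to show $S\leftrightsquigarrow T$ directly for every pair in a comeager (indeed, in the paper, for \emph{every} pair of ergodic) transformations, giving paths of length one. But the engine of your argument has a genuine gap, in two respects. First, the reduction ``it suffices to produce conjugates $S'=gSg^{-1}$, $T'=hTh^{-1}$ in $U$ sharing the same two-block data for $\mathcal{R}=\mathcal{P}\vee\mathcal{Q}$'' is false as stated: membership $T'\in\overline{V(gSg^{-1})}$ quantifies over \emph{every} weak neighborhood of $T'$, i.e., arbitrarily fine partitions and tolerances, while agreement of two-block statistics on the single fixed partition $\mathcal{R}$ controls only the one neighborhood you started from; matching two-block data is (approximately) necessary for lying in that closure, not sufficient. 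Second, there is a quantifier problem in the tower construction meant to repair this: in the definition of $\leftrightsquigarrow$ the witnesses $g^x,g^y$ are fixed once $U$ and $V$ are given, and the closure condition must then hold for those fixed conjugates. Your construction chooses ``the towers and the placement of $\mathcal{R}$ (that is, $g$ and $h$)'' to force the two towers to carry the same color sequence on the bottom $(1-\rho)N$ levels; but as $\rho,\eta\to 0$ and the target partition refines beyond $\mathcal{R}$, you would need to re-choose $g,h$, which is not permitted. For \emph{fixed} $S',T'$, exact matching of $\mathcal{R}$-color words along arbitrarily tall towers, for arbitrarily fine partitions, with $v$ confined to the fixed $V$, is precisely the crux of the whole theorem: the distributions of length-$N$ words of two ergodic transformations over a fixed partition can differ grossly (already for entropy reasons), so nothing short of a substantive lemma fills this, and you explicitly defer it (``the bulk of the work'').

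The paper supplies exactly the two missing ingredients. After conjugating $T_1,T_2$ into $U$ so that moreover $d(H_1T_1H_1^{-1},H_2T_2H_2^{-1})<\epsilon/2$ in the \emph{uniform} (Halmos) metric --- achievable by conjugating both near a common $M$-periodic transformation via Rokhlin towers, which is why uniform closeness, unlike weak closeness, can be prescribed once and for all --- it invokes Foreman--Weiss (Lemma 9) to produce, for each $\delta$, a $T'$ weakly and uniformly $\min\{\delta,\epsilon/2\}$-close to $H_2T_2H_2^{-1}$ having \emph{the same orbits almost everywhere} as $H_1T_1H_1^{-1}$; then a Claim-14-style orbit-matching lemma builds a conjugator $H$ with $d(HH_1T_1H_1^{-1}H^{-1},T')<\delta$ which is the identity on the set $\{x: T(x)=S(x)\}$, whence $d(H,\mathrm{Id})<\epsilon$ and so $H\in\mathcal{V}$ \emph{uniformly in $\delta$}. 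This last point is the mechanism your sketch lacks: your device for keeping $v\in V$ (matched levels share $\mathcal{Q}$-colors) is tied to the fixed basic neighborhood $V$ and the fixed partition $\mathcal{Q}$, whereas what is needed is a conjugator that is small in the bi-invariant uniform metric at every stage of an approximation scheme whose targets refine indefinitely; the orbit-equivalence realization plus the ``identity where $S=T$'' trick delivers exactly that. If you want to complete your route, the statement to prove is: for ergodic $S,T$ with the same orbits a.e. and $d(S,T)<\epsilon$, for every $\delta>0$ there is $H$ with $d(H,\mathrm{Id})<\epsilon$ and $d(HTH^{-1},S)<\delta$ --- which, combined with the orbit realization lemma, is the paper's proof. (A minor point in your favor: in the EMPT case no extra care about ergodicity of approximants is needed, since all approximants are conjugates of $S$ and hence automatically ergodic.)
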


\begin{corollary}\label{cor}
    The conjugacy relation of any generic subset of ${\rm Aut}(X,\mu)$ and on ergodic transformations is not classifiable by any ${\rm TSI}$ group actions. 
\end{corollary}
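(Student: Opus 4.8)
The plan is to verify the two defining features of unbalancedness directly: that every conjugacy class is meager, and that on a comeager set the unbalanced graph is connected — in fact I will aim to show it is complete, i.e. that $S \leftrightsquigarrow T$ for every pair of ergodic transformations $S,T$. Meagerness of orbits is immediate from the turbulence of the conjugacy action established by Foreman and Weiss \cite{Foremanweiss}, since a turbulent action has no non-meager orbit; this covers both the action on ${\rm Aut}(X,\mu)$ and its restriction to the comeager, $G$-invariant set of ergodic transformations. For the connectivity I will first record the classical fact (Halmos's conjugacy lemma, via Rokhlin's lemma) that the conjugacy class of any ergodic, hence aperiodic, transformation is weakly dense in ${\rm Aut}(X,\mu)$. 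Consequently, for ergodic $x$ we have $\overline{Gx}={\rm Aut}(X,\mu)$, the clause ``$U$ meets the orbit of $x$ or $y$'' reduces to ``$U\neq\emptyset$'', and every nonempty basic open set already contains conjugates of both $S$ and $T$.

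Fixing ergodic $S,T$, a neighborhood $V$ of the identity, and a nonempty open $U$, the task becomes: produce conjugates $A\in U$ of $S$ and $B\in U$ of $T$, together with nets $v_t,w_t\in V$ such that $v_tAv_t^{-1}\to B$ and $w_tBw_t^{-1}\to A$ weakly, which yields $B\in\overline{VA}$ and $A\in\overline{VB}$ and hence $S\leftrightsquigarrow T$. The governing idea is that the weak topology is coarse: a neighborhood $V$ of the identity forces control only on finitely many measurable sets, say on the subalgebra $\mathcal{A}_k$ generated by the first $k$ sets of a fixed dense sequence, where $2^{-k}$ lies below the precision of $V$, so that any $v$ fixing $\mathcal{A}_k$ setwise automatically lies in $V$. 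Using weak density of conjugacy classes I will choose the representatives $A,B\in U$ so that $A$ and $B$ induce the same transition distribution on $\mathcal{A}_k$, namely $\mu(P\cap A^{-1}R)=\mu(P\cap B^{-1}R)$ for all atoms $P,R$ of $\mathcal{A}_k$ (and so that both also match the data defining $U$). Since a transformation fixing $\mathcal{A}_k$ preserves these transition numbers, this matching is precisely the constraint imposed by requiring the conjugators to fix $\mathcal{A}_k$, and it is attainable because density lets us push both $S$ and $T$ onto one common finite model.

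The technical heart, and the step I expect to be the main obstacle, is a relative form of the conjugacy lemma: if two ergodic transformations $A$ and $B$ induce the same transition distribution on a finite subalgebra $\mathcal{A}_k$, then $A$ can be approximately conjugated to $B$ by transformations fixing $\mathcal{A}_k$ (and hence lying in $V$). I plan to prove this by a Rokhlin-tower construction carried out relatively to the partition underlying $\mathcal{A}_k$: one builds tall towers for $A$ and for $B$ whose levels refine $\mathcal{A}_k$ and whose column distributions over $\mathcal{A}_k$ agree, and then defines the conjugator to identify the two tower codings while acting as the identity on each atom of $\mathcal{A}_k$. Producing such $\mathcal{A}_k$-fixing conjugators with $v_tAv_t^{-1}\to B$, while keeping careful control of the small exceptional sets left uncovered by the towers and verifying that finer sets beyond $\mathcal{A}_k$ can be reconciled within atoms, is where the real work lies. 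Granting this lemma, both closure conditions hold, so $S\leftrightsquigarrow T$ for all ergodic $S,T$; the unbalanced graph is then complete on the comeager set $D$ of ergodic transformations inside any $G$-invariant comeager $C$, giving length-one paths between all its points. Combined with meagerness of orbits, this shows the conjugacy action is unbalanced on ${\rm Aut}(X,\mu)$ and on the ergodic transformations, and Corollary \ref{cor} then follows from Theorem \ref{unbalance}.
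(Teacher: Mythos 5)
Your global architecture is the same as the paper's: establish Theorem \ref{Mainthm} (meager orbits via Foreman--Weiss, plus $S\leftrightsquigarrow T$ for all ergodic $S,T$, which handles every invariant comeager set since the ergodic transformations are generic) and then invoke Theorem \ref{unbalance}. (The paper also records a much shorter standalone proof of the corollary, reducing via Theorem \ref{FLTHM} to the unbalancedness of the $S_\infty$-action on dense sequences, or to unpinnedness of $=^+$; your route is the Section \ref{Mainresult} route.) However, your ``relative conjugacy lemma'' --- the step you yourself flag as the technical heart --- is false as stated, and this is a genuine gap. If $v$ fixes every atom of $\mathcal{A}_k$ setwise (mod null), then for all atoms $P_0,\dots,P_n$ one has
$$
\mu\bigl(P_0\cap (vAv^{-1})^{-1}P_1\cap\cdots\cap (vAv^{-1})^{-n}P_n\bigr)=\mu\bigl(P_0\cap A^{-1}P_1\cap\cdots\cap A^{-n}P_n\bigr),
$$
so conjugation by $\mathcal{A}_k$-fixing maps preserves the joint distribution of the $\mathcal{A}_k$-valued process of \emph{every} order, not merely the one-step transition matrix. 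Since ${\rm Aut}(X,\mu)$ is a topological group in the weak topology, $v_tAv_t^{-1}\to B$ forces $(v_tAv_t^{-1})^{j}\to B^{j}$ for each $j$, hence the $n$-fold joint distributions of $(B,\mathcal{A}_k)$ would have to coincide \emph{exactly} with those of $(A,\mathcal{A}_k)$ for all $n$. Matching only $\mu(P\cap A^{-1}R)=\mu(P\cap B^{-1}R)$ cannot achieve this, and no refinement of the tower construction can rescue it: with $\mathcal{A}_k$ generated by a single set $P$ of measure $1/2$, take $A$ whose $\{P,P^c\}$-process is i.i.d.\ $(1/2,1/2)$ and $B$ realizing a mixing two-step Markov process with uniform distribution on pairs but, say, $\mu(P^c\cap B^{-1}P^c\cap B^{-2}P)=3/16\neq 1/8$; both are ergodic and satisfy your one-step matching, yet $B$ is not in the weak closure of $\{vAv^{-1}: v \text{ fixes } \mathcal{A}_k\}$. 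Repairing your scheme would require choosing $A,B\in U$ whose $\mathcal{A}_k$-processes agree in distribution at all orders \emph{and} proving a density theorem relative to that common process --- a substantially stronger statement that your proposal neither formulates nor establishes.

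It is instructive to see how the paper avoids this obstruction entirely: instead of algebra-fixing conjugators, it uses conjugators that are small in the Halmos uniform metric $d$. Since $d$ refines the weak topology, the given $\mathcal{V}$ contains a $d$-ball of radius $\epsilon$ about the identity; uniformly small conjugators perturb all higher-order statistics by a controlled amount rather than freezing them. Concretely, the paper picks conjugates $H_iT_iH_i^{-1}\in\mathcal{U}$ at $d$-distance $<\epsilon/2$, applies the Foreman--Weiss orbit realization (Lemma \ref{keylemmaforeman}) to get $T'$ with the same orbits a.e.\ as $H_1T_1H_1^{-1}$ and $d$-close to $H_2T_2H_2^{-1}$, and then uses Lemma \ref{closeorbit}: two ergodic transformations with the same orbits at uniform distance $<\epsilon$ can be conjugated to within any $\delta$ by an $H$ with $d(H,{\rm Id})<\epsilon$, because the conjugator, built from cyclic permutations along a Rokhlin tower, can be taken to be the identity on the set $\{x: S(x)=T(x)\}$, which has measure $>1-\epsilon$. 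That same-orbit mechanism, which your proposal does not use, is precisely what makes the conjugator land in $\mathcal{V}$ while moving $A$ weakly onto $B$; your remaining steps (meagerness of orbits via turbulence, density of ergodic conjugacy classes, reduction to a single $\leftrightsquigarrow$-edge, and the final appeal to Theorem \ref{unbalance}) are all correct.
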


In fact, in Section \ref{shortcut}, we first give a direct argument to answer Question \ref{Q0} and actually this shows the anti-classification theorem even holds when restricted to ${\rm EMPTs}$ with discrete spectrum. In Section \ref{Mainresult}, we give a proof of Theorem \ref{Mainthm}.

\section{A direct proof of the anti-classification} \label{shortcut}
  Recall that $=^+$ is the equivalence relation defined on all sequences of reals as follows,  for all $(x_n),(y_n)$,
$$
(x_n)=^+(y_n)\,\,\mbox{if}\,\, \exists\,g\in S_{\infty}\,\forall n\,x_n=y_{g(n)}.
$$

We will use the following theorem of Foreman and Louveau \cite{Foremanlouveaubook}. 

\begin{theorem} \label{FLTHM}
    {\rm (Foreman and Louveau \cite[Theorem 65]{Foremanlouveaubook})} $=^+$ is Borel bireducible with the isomorphism  relation of the ergodic transformation with discrete spectrum.
\end{theorem}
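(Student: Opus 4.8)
The engine behind this bireducibility is the Halmos--von Neumann theorem: an ergodic MPT $T$ has discrete spectrum precisely when the eigenfunctions of its Koopman operator span $L^2$, in which case the eigenvalues form a countable subgroup $\Lambda(T)\le\mathbb{T}$; two such transformations are isomorphic exactly when $\Lambda(T)=\Lambda(T')$, and every countable subgroup of $\mathbb{T}$ arises as some $\Lambda(T)$, realized as a rotation on a compact monothetic group. So my plan is to show that ``equality of countable subgroups of $\mathbb{T}$'' is Borel bireducible with $=^+$ (equality of countable sets of reals), and then transport everything through the invariant $T\mapsto\Lambda(T)$.

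The easy direction is the reduction of the isomorphism relation to $=^+$: I would send a discrete-spectrum EMPT $T\in\mathrm{Aut}(X,\mu)$ to a Borel enumeration of its eigenvalue group $\Lambda(T)\subseteq\mathbb{T}\cong\mathbb{R}/\mathbb{Z}$, viewed as a countable set of reals. Two transformations are isomorphic iff these sets coincide, i.e. iff the enumerations are $=^+$-related. The only point to verify is that $T\mapsto\Lambda(T)$ admits a Borel enumeration, which follows from the Borel dependence of the spectral measures $\sigma_f$ of $U_T$ on $T$ together with the fact that eigenvalues are exactly their atoms; I would cite the standard Borelness here.

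For the converse, reducing $=^+$ to the isomorphism relation, the idea is a \emph{rigid} coding of a set of reals by the subgroup it generates. Since algebraic dependence over $\mathbb{Q}$ is a meager (and null) relation, Mycielski's theorem yields a perfect set of reals algebraically independent over $\mathbb{Q}$; being Borel isomorphic to $\mathbb{R}$, it furnishes a Borel injection $\iota\colon\mathbb{R}\to\mathbb{T}$ whose image, together with $1$, is $\mathbb{Q}$-linearly independent in $\mathbb{R}/\mathbb{Z}$. After a routine pre-reduction I may assume the input is an injective enumeration $(a_n)$ of an infinite set, and I then map $(a_n)$ to the rotation $R_\alpha\colon x\mapsto x+\alpha$ of the one fixed standard space $(\mathbb{T}^\omega,\mathrm{Haar})$ by $\alpha=(\iota(a_n))_n$. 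The characters of $\mathbb{T}^\omega$ are the finitely supported $(n_k)$, the Koopman eigenvalue of $\chi_{(n_k)}$ is $\sum_k n_k\iota(a_k)\in\mathbb{T}$, so $R_\alpha$ has discrete spectrum with eigenvalue group $\langle\iota(a_n):n\rangle$, and $R_\alpha$ is ergodic because the $\mathbb{Q}$-independence of $\{1\}\cup\iota(\mathbb{R})$ makes the eigenvalue $1$ simple. The map $(a_n)\mapsto R_\alpha$ is plainly Borel (indeed continuous), and linear independence gives $\langle\iota(a_n)\rangle\cap\iota(\mathbb{R})=\{\iota(a_n):n\}$, so the generated group recovers the set $\{a_n\}$. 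Hence $(a_n)=^+(b_n)$ iff $\Lambda(R_\alpha)=\Lambda(R_\beta)$ iff $R_\alpha\cong R_\beta$.

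The main obstacle I anticipate is Borelness rather than algebra: producing the Borel $\mathbb{Q}$-independent family $\iota$ (checking that algebraic independence over $\mathbb{Q}$ upgrades to $\mathbb{Q}$-linear independence of $\{1\}\cup\iota(\mathbb{R})$ in the circle) and, in the easy direction, the Borel extraction of the eigenvalue group from $T$ through its spectral data. The realization step, which in the general Halmos--von Neumann picture would require a Borel identification of the varying dual groups with a single standard space, is sidestepped by passing to injective enumerations and using the fixed model $(\mathbb{T}^\omega,\mathrm{Haar})$; verifying the routine pre-reduction of $=^+$ to its restriction to injective enumerations of infinite sets is precisely what makes this legitimate.
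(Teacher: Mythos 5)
The paper itself offers no proof of Theorem \ref{FLTHM}; it is imported verbatim from Foreman and Louveau, and your argument is essentially the standard one underlying that citation: Halmos--von Neumann turns isomorphism of discrete-spectrum EMPTs into equality of the countable eigenvalue subgroups of $\mathbb{T}$, the forward reduction extracts a Borel enumeration of $\Lambda(T)$ from the atoms of the spectral measures, and the reverse reduction codes a countable set of reals by a rotation of $(\mathbb{T}^\omega,\mathrm{Haar})$ with rationally independent coordinates supplied by a Mycielski perfect set (algebraic independence of the perfect set does upgrade to $\mathbb{Q}$-linear independence of $\{1\}\cup\iota(\mathbb{R})$, since a rational linear relation with constant term is a polynomial relation, and your intersection argument $\langle\iota(a_n)\rangle\cap\iota(\mathbb{R})=\{\iota(a_n)\}$ correctly recovers the set from the group). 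The sketch is correct modulo the two routine points you flag yourself --- Borel selection of atoms (Luzin--Novikov) and the padding pre-reduction to injective enumerations of infinite sets, where injectivity is genuinely needed for ergodicity of $R_\alpha$; the one small inaccuracy is the parenthetical claim that $(a_n)\mapsto R_\alpha$ is continuous, which fails because $\iota$ is only Borel, though Borel is all the reduction requires.
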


Below we give a direct proof of Corollary \ref{cor}. In fact, we give two proofs. The first one uses the machinery of unbalancedness of Allison and Panagiotopoulos, it essentially follows the work of \cite{Unblanced}, but we prove it directly. The second proof uses pinned equivalence relation.


\begin{proof}[Proof of Corollary \ref{cor}]

    By Theorem \ref{FLTHM} it is enough to show $=^+$ is not Borel reducible to any ${\rm TSI}$ Polish group action since the group $[T_0]$ is a TSI group.
    
    Let $D$ be the set of all countable dense sequences of  reals. Note that $D$ is a dense $G_{\delta}$ subset of $\mathbb{R}^{\omega}$.

    We will prove the following:

    \begin{lemma}\label{unb}
        The action of $S_{\infty}$ on $D$ is unbalanced.
    \end{lemma}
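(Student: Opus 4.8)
The plan is to reduce the abstract relation $\leftrightsquigarrow$ on $D$ to a transparent condition on the \emph{value sets} of the sequences, and then to join points by short paths built from amalgamated sequences. Throughout write $R(x)=\{x_n:n\in\omega\}\subseteq\mathbb{R}$ for the set of values of $x$, so that $x\in D$ says exactly that $R(x)$ is dense, and note that $R$ is constant on $S_\infty$-orbits since the action merely permutes coordinates. First I would analyze $\leftrightsquigarrow$. A basic identity-neighborhood in $S_\infty$ is the pointwise stabilizer $V_N$ of $\{0,\dots,N-1\}$, and a basic open $U\subseteq\mathbb{R}^\omega$ constrains finitely many coordinates to lie in open intervals. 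The first computation is that for $z\in D$ one has $\overline{V_N z}=\{w: w_i=z_i\text{ for }i<N\}$, because permuting the (still dense) tail of $z$ allows one to approximate arbitrary values on any finite set of coordinates $\ge N$. Substituting this into the definition, the two closure conditions collapse into one: that $g^xx$ and $g^yy$ agree on coordinates $0,\dots,N-1$ while both lying in $U$. From this I would extract the only implication I actually need, namely that \emph{if $R(x)\cap R(y)$ is dense then $x\leftrightsquigarrow y$}: given $N$ and $U$, choose the first $N$ common coordinate values from the dense set $R(x)\cap R(y)$ (using density to meet the finitely many constraining intervals) and then complete each of $g^xx,g^yy$ to an honest rearrangement of $x$, resp.\ $y$. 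I would also dispatch the meager-orbit requirement at once: the orbit of $x$ is contained in $\{w:w_0\in R(x)\}$, which is meager because $R(x)$ is countable.

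The main obstacle is that $\leftrightsquigarrow$ is itself a meager relation, so generic pairs are never directly related and, worse, for fixed $x$ the set $\{z:R(z)\cap R(x)\text{ dense}\}$ is meager. Consequently the naive strategy of joining $x$ to $y$ through the single hub $x\sqcup y$ (the interleaving whose range contains both $R(x)$ and $R(y)$), or through any fixed reference point, cannot be controlled: one has no way to force such a hub to lie inside the given comeager invariant $C$. This is the crux of the whole lemma, and resolving it is where the real work sits.

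The resolution I would use is to amalgamate \emph{symmetrically} over a common auxiliary sequence $t$ and to locate $t$ by a category argument. Let $\phi(x,t)=x\sqcup t$ denote interleaving, $\phi(x,t)_{2n}=x_n$, $\phi(x,t)_{2n+1}=t_n$; this is a homeomorphism $\mathbb{R}^\omega\times\mathbb{R}^\omega\to\mathbb{R}^\omega$ carrying $D\times D$ into $D$. Since $C$ is comeager and has the Baire property, $\widetilde C:=\{(x,t)\in D\times D: x\sqcup t\in C\}=\phi^{-1}(C)\cap(D\times D)$ is comeager in $D\times D$, so by Kuratowski--Ulam the set $E$ of those $x$ for which $T_x:=\{t: x\sqcup t\in C\}$ is comeager is itself comeager. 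I would then set $D'=C\cap E$. For any $x,y\in D'$ the comeager sets $T_x$ and $T_y$ intersect, so fix $t\in T_x\cap T_y$; then $x\sqcup t,\,y\sqcup t\in C$, and since
$$
R(x)\subseteq R(x\sqcup t),\qquad R(t)\subseteq R(x\sqcup t)\cap R(y\sqcup t),\qquad R(y)\subseteq R(y\sqcup t)
$$
are all dense, the sufficiency criterion from the first step yields the path $x\leftrightsquigarrow x\sqcup t\leftrightsquigarrow y\sqcup t\leftrightsquigarrow y$ with every vertex in $C$, proving $[x]$ and $[y]$ lie in the same component of $C/G$. I expect the delicate but routine points to be the finite bookkeeping in the first step (realizing a prescribed finite value pattern as an actual permutation of a possibly non-injective sequence) and checking the Baire-property hypotheses needed for Kuratowski--Ulam; the genuinely essential idea is the replacement of a single hub by the $t$-amalgamation, which is exactly what lets all intermediate vertices be pushed into $C$.
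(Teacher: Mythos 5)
Your proposal is correct, and its first half --- computing $\overline{V_N z}$ as the set of sequences agreeing with $z$ on the first $N$ coordinates, deducing that a dense set of shared values forces $x\leftrightsquigarrow y$, and getting meagerness of orbits from countability of the value set --- is exactly the paper's argument (its lemma on sequences sharing a dense infinite subset, plus the set $B_x$ used for meagerness). Where you genuinely diverge is in building the path. The paper connects arbitrary $x,y\in D$ through the single hub $z=x\sqcup y$ and stops; it does not address the requirement, built into the definition of unbalancedness, that every vertex of the path lie in the given $G$-invariant comeager set $C$, and, as you observe, since the set of sequences sharing even one value with a fixed $x$ is meager, there is no generic way to force that one hub into $C$. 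Your replacement --- amalgamating over an auxiliary $t$ chosen via Kuratowski--Ulam so that both $x\sqcup t$ and $y\sqcup t$ land in $C$, at the cost of a length-three path $x\leftrightsquigarrow x\sqcup t\leftrightsquigarrow y\sqcup t\leftrightsquigarrow y$ --- supplies precisely this missing control. So your route costs one extra vertex and one category argument, and what it buys is a verification of the definition as literally stated (connectivity of $(C/G,\leftrightsquigarrow)$ for every invariant comeager $C$), whereas the paper's shorter argument only exhibits connectivity of the graph on all of $D$. The remaining points you flag as routine (realizing a prescribed finite pattern of values by an honest permutation, and the Baire-property hypotheses for Kuratowski--Ulam) are indeed routine and present no obstacle.
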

  \begin{proof}
  
   By the choice of $D$, every orbit is dense. Next we prove that every orbit is meager. Let $x\in D$ and let 
    $$
    B_x=\{b\in D\mid \,\, \forall m\,b(1)\neq x(m) \}.
    $$
     Note that $B_x=((\mathbb{R}-\{x(m)\}_m) \times\mathbb{R}^{\omega}) \cap D$ which is the intersection of a product of two comeagers and  $D$. Thus, $B_x$ is comeager. Since we remove all $\{x_m\}$ in the first coordinate, we have that $B_x$ is not in the orbit of $x$. Thus, $B_x$ is a subset of the complement of the orbit of $x$ which implies that the orbit of $x$ is meager.

     Now we prove that for any $x,y\in D$ there is a path on the unbalanced graph connecting $x$ and $y$. 

    \begin{lemma}\label{keylemma}
        For any $x,y\in D$, if $x,y$ share a dense infinite subset, in other words, if there exists two sequences of natural numbers $\{p_n\},\{q_n\}$ such that $\{x_{p_n}\}=\{y_{q_n}\}$ is dense in $\mathbb{R}$, then $x \leftrightsquigarrow y$.
    \end{lemma}
    \begin{proof}
        Assume that both $x$ and $y$ contain the dense sequence $\{d_n\}$.
         
        Since every orbit is dense, every open set intersects both orbits of $x$ and $y$, fix an open set $U \subset C$. Take an any open neighborhood of $1_{S_{\infty}}$, say $V$. We may assume that $V$ is the set of those permutations which fix the first $k$ many elements. Since $\{d_m\}$ is dense, we can find $d_{t_1},\dots,d_{t_r}$ such that $r\geq k$ and all elements with the first $r$ coordinates equal to $(d_{t_1},\dots,d_{t_r})$ are all in $U$. Then take $g^x$ and $g^y$ to be two elements in $S_{\infty}$ such that the first $r$-elements of $g^xx$ and $g^yy$ are the same and equal to $(d_{t_1},\dots,d_{t_r})$. So we have $g^xx,g^yy\in U$. Now since $V$ fixes the first $k$ many elements, $k\leq r$ and $x$ enumerates a dense sequence, we have that $\overline{Vg^xx}$ is the set of all sequences whose first $k$-coordinate is equal to $(d_{t_1},d_1,\dots,d_{t_k})$.  This implies that $\overline{Vg^xx}=\overline{Vg^yy}$ which means $x \leftrightsquigarrow y$.
    \end{proof}

    For all $x,y\in D$, let $z$ be a sequence such that $z(2n)=x(n)$ and $z(2n+1)=y(n)$. We have $z\in D$, and by Lemma \ref{keylemma}, we know that $x\leftrightsquigarrow z$ and $z\leftrightsquigarrow y$.

\end{proof}
Since $D$ is $S_{\infty}$-invariant, we know that $E^D_{S_{\infty}}$ is Borel reducible to $=^+$. 

An alternative proof goes as follows.  $=^+$ is unpinned \cite[Claim 17.2.1]{BorelEQ}, ${\rm CLI}$ group actions are all pinned \cite[Chapter 17.4]{BorelEQ} and ${\rm TSI}$-groups are all ${\rm CLI}$. Thus, $=^+$ is not reducible to any ${\rm TSI}$ group actions.

\end{proof}

\section{Global unbalancedness} \label{Mainresult}

We use the same notation as in \cite{Foremanweiss}.  ${\rm Aut}(X,\mu)$ can be equipped with a natural topology whose neighborhood basis is the following collection of sets: 
$$
\mathcal{N}_T(A_1,A_2,..,A_n, \epsilon)=\{S\in {\rm Aut}(X,\mu)| \forall i\leq n\, \mu(S(A_i)\Delta T(A_i))<\epsilon)\}
$$
where $T\in {\rm Aut}(X,\mu)$ and $A_i\subset X$ are pairwise disjoint measurable sets. 
Halmos defined a metric which is finer than the above topology: 
$$
d(S,T)=\mu(\{x\in X|S(x)\neq T(x)\}).
$$

For $T\in {\rm Aut}(X,\mu)$ and $M\in \mathbb{N}$, we call $T$ an \textbf{$M$-periodic} transformation if for a.e. $x\in X$, we have $T^Mx=x$ and for any $1\leq n<M$ we have $T^nx\neq x$. 
\begin{theorem}
    The conjugacy action of ${\rm Aut}(X,\mu)$ on the set of ergodic measure- preserving transformations is unbalanced.
\end{theorem}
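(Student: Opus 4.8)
The plan is to follow the blueprint of the direct argument in Section~\ref{shortcut}, transporting the role played there by dense sequences of reals to the role played here by periodic approximations. Writing $\mathcal{E}$ for the space of ergodic transformations, I would reduce the theorem to two assertions: first, that every conjugacy orbit is meager in $\mathcal{E}$; and second, a ``key lemma'' asserting that \emph{any} two ergodic transformations $S,T$ satisfy $S\leftrightsquigarrow T$. For meagerness and density I would invoke classical weak approximation: the conjugacy class of any aperiodic transformation is weakly dense in ${\rm Aut}(X,\mu)$, so every orbit is dense in $\mathcal{E}$, and no single conjugacy class can be comeager (this is already contained in the Foreman--Weiss turbulence theorem \cite{Foremanweiss}), so every orbit is meager. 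Granting the key lemma, the unbalanced graph on $\mathcal{E}/{\rm Aut}(X,\mu)$ is \emph{complete}, so for every invariant comeager $C$ one may simply take $D=C$: any two points of $C$ are joined by a single edge, which is exactly the connectivity demanded by the definition of unbalancedness. Thus the entire content is the key lemma, which plays the role of Lemma~\ref{keylemma}.

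For the key lemma I would fix a basic weak neighborhood $V=\mathcal{N}_{\mathrm{id}}(B_1,\dots,B_m,\delta)$ of the identity and a basic weak-open set $U=\mathcal{N}_{R_0}(C_1,\dots,C_p,\eta)$ meeting the (dense) orbits of $S$ and $T$. The aim is to produce conjugates $S'=g^SS(g^S)^{-1}$ and $T'=g^TT(g^T)^{-1}$ lying in $U$ with $\overline{V\cdot S'}=\overline{V\cdot T'}$, where $V\cdot R=\{hRh^{-1}:h\in V\}$ and closures are taken in the weak topology; since $\mathrm{id}\in V$, equality of closures yields the two required containments $T'\in\overline{V\cdot S'}$ and $S'\in\overline{V\cdot T'}$. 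The mechanism is the Rokhlin machinery: I would choose an $M$-periodic transformation $P$ with $M$ very large, built on a Rokhlin tower whose levels refine both partitions $\{B_j\}$ and $\{C_i\}$, and with $P\in U$. Because the conjugacy class of an aperiodic transformation is weakly dense, I can conjugate each of $S$ and $T$ so that $S'$ and $T'$ both approximate $P$ as closely as desired on the finite families of sets entering $U$ and $V$; in particular $S',T'\in U$.

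The heart of the matter, and the step I expect to be the main obstacle, is the computation of the closure $\overline{V\cdot R}$ when $R$ is an ergodic transformation weakly approximating such a periodic $P$. This is precisely the analogue of the computation in Lemma~\ref{keylemma}, where permutations fixing the first $k$ coordinates could arbitrarily rearrange a dense tail and so filled out every sequence with the prescribed initial segment. Here the small conjugations $h\in V$ are exactly those nearly preserving the coarse partition $\{B_j\}$, and I would realize them as rearrangements of the tower levels of $P$ that move each $B_j$ by less than $\delta$; letting $M$ grow, these rearrangements conjugate $R$ weakly onto every transformation sharing the coarse incidence data $\mu(B_i\cap R^{-1}B_j)$ with $R$. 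Since $S'$ and $T'$ both approximate the same $P$, which is adapted to $\{B_j\}$, they share this data, whence $\overline{V\cdot S'}=\overline{V\cdot T'}$. The hard part will be the quantitative bookkeeping: controlling the Rokhlin error set, verifying that the level rearrangements genuinely remain inside $V$ while conjugating $S'$ to within $\eta$ of $T'$ on the sets $C_i$, and choosing $M$ large relative to $\delta$, $\eta$, and the number of partition cells. Once this Rokhlin-tower argument is in place, the key lemma, and hence the theorem, follows.
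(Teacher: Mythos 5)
Your overall architecture coincides with the paper's: orbits are dense and meager (Foreman--Weiss), and one shows the unbalanced graph is complete by proving $S\leftrightsquigarrow T$ for \emph{all} ergodic $S,T$, so that $D=C$ works. The divergence, and the gap, is in the mechanism for producing $S',T'\in U$ with $T'\in\overline{V\cdot S'}$ and $S'\in\overline{V\cdot T'}$. Your plan rests on the claim that for $V=\mathcal{N}_{\mathrm{id}}(B_1,\dots,B_m,\delta)$ the closure $\overline{V\cdot R}$ consists of the transformations sharing the two-step incidence data $\mu(B_i\cap R^{-1}B_j)$ with $R$. This is not correct as stated, and it is also the entire content of the argument, which you defer as ``bookkeeping.'' Conjugating $R$ by $h\in V$ perturbs not only the two-step data but every $N$-block distribution of the $\{B_j\}$-name process of $R$ by at most $O(N\delta)$; hence membership in $\overline{V\cdot R}$ constrains all block statistics up to length roughly $1/\delta$, not just pairs. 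Two ergodic transformations can share all two-step incidence data while having wildly different higher-order $\{B_j\}$-name statistics, and then neither lies in the other's $V$-closure. Making $S'$ and $T'$ merely \emph{weakly} close to a common periodic $P$ (which is all that approximation inside $U$ gives you) does not force their higher-order name statistics to agree, so the claimed equality $\overline{V\cdot S'}=\overline{V\cdot T'}$ does not follow.

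The missing idea is the one the paper uses to convert uniform closeness into conjugacy by a near-identity element. Since the Halmos metric refines the weak topology, $\mathcal{V}$ contains a uniform $\epsilon$-ball around the identity. One first places $H_1T_1H_1^{-1}$ and $H_2T_2H_2^{-1}$ in $\mathcal{U}$ at uniform distance $<\epsilon/2$; then Lemma \ref{keylemmaforeman} (Foreman--Weiss) replaces $H_2T_2H_2^{-1}$ by a $T'$ that is weakly $\delta$-close to it and uniformly $\epsilon$-close to $H_1T_1H_1^{-1}$ but has the \emph{same orbits} as $H_1T_1H_1^{-1}$; and Lemma \ref{closeorbit} shows that two same-orbit ergodic transformations at uniform distance $<\epsilon$ are conjugate, up to arbitrarily small uniform error, by an $H$ that is the identity on the set where they already agree, hence satisfies $d(H,\mathrm{Id})<\epsilon$ and so lies in $\mathcal{V}$. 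This same-orbits detour is what makes the conjugator land in $V$; without it (or a genuine proof of a corrected, higher-order version of your incidence-data claim), your argument does not close.
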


\begin{proof}
    In \cite[Theorem 12]{Foremanweiss}, Foreman and Weiss proved that every orbit of this action is dense and meager. Thus, we only need to verify the last requirement of unbalancedness. Actually we are going to prove that for any two ergodic transformations $T_1$ and $T_2$, we have $T_1 \leftrightsquigarrow T_2$. Since ${\rm EMPTs}$ are generic in ${\rm MPTs}$, this implies that the whole conjugacy action of ${\rm MPTs}$ on itself is unbalanced.
\begin{lemma} \label{Rokhlin}
    {\rm (Rokhlin's Lemma)} Let $T$ be an ergodic measure-preserving transformation on $X$, $\epsilon>0$. Then there is a set $B\subset X$ such that
    \begin{enumerate}
        \item $B,T\,B,T^2B,\dots,T^{M-1}B$ are pairwise disjoint.
        \item The measure of $\bigcup_{i<M}T^iB$ is at least $1-\epsilon$.
    \end{enumerate}
\end{lemma}

We will call $B,T\,B,T^2B,\dots,T^{M-1}B$ a \textbf{$(M,\epsilon)$ Rokhlin tower with base $B$}. The following corollary is a well-known consequence of the Rokhlin's Lemma.

\begin{corollary} \label{Corouse}
    For any ergodic transformation $R\in {\rm Aut}(X,\mu)$ and $\eta>0$, there exists an $M_0$, such that for all $M\geq M_0$, there exists an $M$-periodic transformation $P$ such that $d(R,P)<\eta$.
\end{corollary}

\begin{proof}
   Fix $R$. Take $M_0$ such that $\frac{1}{M_0}<\frac{\eta}{2}$. Now for any $M\geq M_0$, by Lemma \ref{Rokhlin}, we can find a $(M, \frac{\eta}{2})$ Rokhlin tower with base $B$ for $R$. Then we can take any $M$-periodic MPT, say $P$, such that $P=R$ on $\bigcup_{i<M-1}R^i B$. Now since $R=P$ on $\bigcup_{i<M-1}R^i B$ and by the definition of Rokhlin tower, we know 
   $$
   \{x| R(x)\neq P(x)\} \subset R^{M-1} (B)\,\cup\,(X-\bigcup_{i<M}R^iB).
   $$
   Since 
   $$
   \mu (R^{M-1}B)+ \mu (X-\bigcup_{i<M}R^iB)< \frac{1}{M}+\frac{\eta}{2}<\eta,
   $$
   we have $d(R,P)<\eta$.
\end{proof}

To prove the unbalancedness, we need the following lemma from \cite{Foremanweiss}.
\begin{lemma} \label{keylemmaforeman}
    {\rm \cite[Lemma 9]{Foremanweiss}} Let $S,T$ be ergodic measure-preserving transformations and $\mathcal{W}$ a neighborhood of $S$. Then there is a $T'\in \mathcal{W}$ that has the same orbits almost everywhere as $T$.
\end{lemma}

The idea of the following lemma comes from \cite[Claim 14]{Foremanweiss},
\begin{lemma} \label{closeorbit}
     Fix $\epsilon>0$. Let $S$ and $T$ be two ergodic measure-preserving transformations, suppose $S$ has the same orbits as $T$ almost everywhere. If $d(S,T)<\epsilon$, then for any $\delta>0$, we can find $H$ such that $d(H,{\rm Id})<\epsilon$ and $d(HTH^{-1},S)<\delta$. 
\end{lemma}
\begin{proof}
    Since $S$ and $T$ are orbit equivalent, we can find an $M$ large enough such that the set
    $$
    L_0=\{x\in X| \{S(x),S^{-1}(x)\}\not \subset \{T^{-M+1}x, T^{-M+2}x,\dots, T^{M-1}x\}\}
    $$
    has measure less than $\frac{\delta}{3}$. 

    Now take a large enough $N$ such that $\frac{2M}{N}<\frac{\delta}{3}$, and choose an $(N,\frac{\delta}{3})$ Rokhlin tower with base $B$ for $T$. We let $L_1$ be the union of the first and the last $M$-levels of the tower. Then by the definition of Rokhlin tower, we know that $\mu(L_1)<\frac{\delta}{3}$. Let $L_2$ be the collection of points which are not in the Rokhlin tower. Thus we know the measure of the union $L=L_0\cup L_1 \cup L_2$ is less than $\delta$.

    Let $y\in B$. Then for all $x\in \{T^iy|M\leq i \leq N-M\}\setminus L_0$, both $Sx$ and $S^{-1}x$ belong to $\{T^iy:0\leq i\leq N-1\}$.

    Since both $T$ and $S$ are not periodic, we can choose a cyclic permutation $\sigma_y$ of $\{0,1,\dots, N-1\}$ such that for all $i,j<N-1$, if $S(T^i)=T^jy$ then $\sigma_y(i)=j$. For each permutation $\sigma$ of $\{0,\dots,N-1\}$, the collection $B_{\sigma}=\{y:\sigma_y=\sigma\}$ is measurable and the sets $B_{\sigma}$ partitions $B$.

    Given $\sigma$, a cyclic permutation of $\{0,1,\dots, N-1\}$, let $\tau_{\sigma}$ be the permutation of $\{0,1,\dots, N-1\}$ such that $\tau_{\sigma}\sigma \tau_{\sigma}^{-1}$ is the cyclic permutation $(0,1,\dots,N-1)$. Define $u_{\sigma}$ on $\bigcup_{i<N}T^iB_{\sigma}$ by setting
    $$
     u_{\sigma}(T^iy)=T^{\tau_{\sigma}(i)}y
    $$
    for $y\in B_{\sigma}$ and $u_{\sigma}(x)=x$ for $x\not \in \bigcup_{i<N}T^iB_{\sigma}$.

    Now let $H\in {\rm Aut}(X,\mu)$ be such that $H=u_{\sigma}$ on $\bigcup_{i<N}T^iB_{\sigma}$ and $H(x)=x$ otherwise.
    Since the collections of points moved by the various $u_{\sigma}$'s are pairwise disjoint, the $u_{\sigma}$'s commute and $H$ is well defined. 

     We claim that each $u_{\sigma}$ is identity on $\{x|T(x)=S(x)\}$. We choose $y\in B\setminus L$, suppose $T^i(y)\in \{x|T(x)=S(x)\}$, then $S(T^iy)=T^{i+1}y$.  Thus, we know that $\sigma(i)=i+1$. This implies that $\tau_{\sigma}(i)=i$. By the definition of $u_{\sigma}$, we know  $u_{\sigma}(T^iy)=u_{\sigma}(T^{\tau_{\sigma}(i)}(y))=T^iy$.
    
    This means, $H$ is identity on the set 
    $$
   \{x|T(x)=S(x)\}.
    $$
    We have $d(H,{\rm Id})<\epsilon$. 

    By the choice of $\tau_{\sigma}$, for all $x\in X\setminus L$,
    $$
    HTH^{-1}=S,
    $$
     thus, $d(HTH^{-1},S)\leq \mu(L)<\delta$.
\end{proof}
We continue the proof of $T_1\leftrightsquigarrow T_2$.  Fix an open neighborhood $\mathcal{V}$ of the identity in ${\rm Aut}(X,\mu)$ and $\mathcal{U}\subset {\rm Aut}(X,\mu)$ be any open set. We may assume that $\mathcal{V}$ contains an $\epsilon$-neighborhood of the identity.

Since every orbit is dense, we can find $H_1,H_2$ such that $H_iT_iH_i^{-1}\in \mathcal{U}$ for all $i\in \{1,2\}$ and 
$$
d(H_1T_1H_1^{-1},H_2T_2H_2^{-1})<\frac{\epsilon}{2}.
$$
Now we fix $\delta>0$. By Lemma \ref{keylemmaforeman}, applied to $T=H_1T_1H_1^{-1}$, $S=H_2T_2H_2^{-1}$ and $\mathcal{W}$  the  ${\rm min}\{\delta, \frac{\epsilon}{2}\}$-neighborhood of $H_2T_2H_2^{-1}$, we can find an $T'\in {\rm Aut}(X,\mu)$ such that $T'$ has the same orbits almost everywhere as $H_1T_1H_1^{-1}$ and 
$$
d(T',H_2T_2H_2^{-1})<{\rm min}\{\delta, \frac{\epsilon}{2}\}.$$ 
Thus, we have $d(T',H_1T_1H_1^{-1})<\epsilon$. By Lemma \ref{closeorbit} applied to $T=H_1T_1H_1^{-1}$ and $S=T'$, we can find an $H\in \mathcal{V}$ such that 
$$d(HH_1T_1H_1^{-1}H^{-1},T')< \delta.
$$ 
This implies that 
$$
d(HH_1T_1H_1^{-1}H^{-1},H_2T_2H_2^{-1})<2 \delta.
$$
As a result, we have 
$$
H_2T_2H_2^{-1}\in \overline{\mathcal{V}H_1T_1H_1^{-1}}.
$$
The proof of the other direction is symmetric. Take $g^{T_1}=H_1$ and $g^{T_2}=H_2$ in the definition of unbalancedness. We get $T_1 \leftrightsquigarrow T_2$.
\newline

\end{proof}

\section{Acknowledgments}
The author thanks François Le Ma\^{i}tre for valuable comments on a preliminary draft of the paper and Aristotelis Panagiotopoulos for helpful discussions on unbalancedness.
\bibliographystyle{plain} 
\bibliography{bibliography} 

\end{document}